\newtheorem{Theorem}{Theorem}[section]
\newtheorem{Cor}[Theorem]{Corollary}
\newcommand{\tor}{\stackrel{r}{\rightarrow}}
\newcommand{\torr}{\stackrel{R}{\rightarrow}}
\begin{document}
\date{April 21, 2011}

\title{Rainbow induced subgraphs in proper vertex colorings}
\author{Andrzej Kisielewicz and Marek Szyku\l a}

\address{University of Wroc\l aw\\
Department of Mathematics and Computer Science\\
pl. Grunwaldzki 2, 50-384 Wroc\l aw, Poland}
\email{andrzej.kisielewicz@math.uni.wroc.pl, marek.szykula@ii.uni.wroc.pl}
\thanks{Supported in part by Polish MNiSZW grant N N201 543038.}
\keywords{rainbow, induced subgraph, vertex coloring, replication graph}


\begin{abstract}
For a given graph $H$ we define $\rho(H)$ to be the minimum order of a graph $G$ such that every proper vertex coloring of $G$ contains a rainbow induced subgraph isomorphic to $H$. We give upper and lower bounds for $\rho(H)$, compute the exact value for some classes of graphs, and consider an interesting combinatorial problem connected with computation of $\rho(H)$ for paths. This research is motivated by some ideas in on-line graph coloring algorithms.  
\end{abstract}

\maketitle

Rainbow induced subgraphs have been considered in many papers in connection with various problems of extremal graph theory. They have been considered both for edge-colorings and vertex-colorings, and both in terms of existence or in terms of avoiding (see \cite{AC,AI,AM,AS,KMSV,KT,KL}). Our special motivation comes from research in on-line coloring (see \cite{BCP,Ma}), where the base for some algorithms is the existence of rainbow anticliques to force a player to use a new color. In particular, in \cite{Ma}, a problem has been formulated to estimate the minimal number of moves in the game considered one needs to force the appearance of a rainbow copy of a fixed graph $H$ in a fixed class of graphs $C$. 

In this paper we deal with proper vertex colorings of graphs, \emph{colorings}, in short, and \emph{rainbow induced subgraphs}, by which we mean induced subgraphs whose all vertices have different colors. We consider a problem of constructing a minimal graph $G$ such that for a given graph $H$ every coloring of $G$ contains a rainbow induced subgraph isomorphic to $H$.  We shall write simply $G\tor H$ to denote such a situation. We define $\rho(H)$ to be the least number $m$ such that there exist a graph $G$ with $G \tor H$.  

A similar definitions are introduced in \cite{AM} in connection with anti-Ramsey numbers. There is no requirement for colorings to be proper. Instead, there is a restriction on the number of colors used to be not smaller than the cardinality $|H|$. As a consequence it may happen there are only finitely many graphs that in every such coloring contain rainbow colored copies of $H$, and the authors concentrate on such situations. Proper vertex colorings and a problem connected with the existence of rainbow induced stars and paths have been considered in \cite{KT}. Interesting results on proper coloring for edges are contained in \cite{KMSV} and \cite{KL}.

\section{The bounds}

It is not difficult to see that for every graph $H$ there exists a graph $G$ satisfying $G\tor H$. In our first result we establish a relatively tight bounds for $\rho(H)$.

\begin{Theorem}\label{th1}
Let $m'=m'(H)$ denotes the number of non-edges in a graph $H$, $\chi = \chi(H)$, and $n = |H|$. Then the following holds
\begin{equation}
\left\lceil \frac{n}{\chi} \right\rceil \left( n - \frac{\chi}{2}\left\lceil\frac{n}{\chi}-1\right\rceil\right)  \leq \rho(H) \leq n+m' \label{eq1}
\end{equation} 
\end{Theorem}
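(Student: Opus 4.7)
I would construct an explicit graph $G$ of order $n+m'$ with $G\tor H$. Take $V(G)=V(H)\cup W$, where $W=\{w_e:e\in\bar E(H)\}$ has one new vertex per non-edge of $H$; for each non-edge $e=\{u,v\}$ choose an endpoint (say $v$) and declare $w_e$ a true twin of $v$ in $H$, i.e.\ $N_G(w_e)=N_H(v)\cup\{v\}$. I would also turn $W$ into a clique so that all replicas receive pairwise different colors in any proper coloring. The verification that $G\tor H$ is then an exchange argument: given a proper coloring $c$ of $G$, start with $S_0=V(H)$ as a tentative rainbow copy of $H$; if $c|_{S_0}$ is injective, we are done. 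Otherwise pick any two vertices $u,v\in V(H)$ with $c(u)=c(v)$. Since $c$ is proper on $G[V(H)]=H$, this pair must span a non-edge; swapping $v$ for $w_{\{u,v\}}$ yields a set that still induces $H$ (because $w_{\{u,v\}}$ and $v$ agree on $V(H)\setminus\{v\}$) and now has $c(w_{\{u,v\}})\ne c(u)$ (because $w_{\{u,v\}}\sim v$ and $c(v)=c(u)$). The main obstacle is that a single swap may create new color conflicts---either with another vertex of $V(H)$ or with another replica $w_f$---so the real work of the proof is a global termination argument, where I would exploit the clique on $W$ and induct on the number of remaining conflicts.

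\textbf{Plan for the lower bound.} Any proper coloring of $G$ uses at least $\chi(G)$ colors, and a rainbow induced copy of $H$ requires $n$ distinct colors, so $G\tor H$ forces $\chi(G)\ge n$ and hence $|V(G)|\ge n$. To reach the full bound I would first rewrite the stated quantity in the telescoping form
\[
\left\lceil\frac{n}{\chi}\right\rceil\!\left(n-\frac{\chi}{2}\!\left(\left\lceil\frac{n}{\chi}\right\rceil-1\right)\right)=\sum_{i=0}^{k-1}(n-i\chi),\qquad k=\lceil n/\chi\rceil,
\]
which suggests a layered proof. My plan is by contrapositive: assume $|V(G)|<\sum_{i=0}^{k-1}(n-i\chi)$ and construct a proper coloring of $G$ with no rainbow induced copy of $H$, built in $k$ stages where stage $i$ contributes at most $n-i\chi$ new colors. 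The coloring would be guided by a fixed optimal $\chi$-coloring of $H$ with classes $V_1,\dots,V_\chi$ of sizes $s_1\ge\cdots\ge s_\chi$ (so $s_1\ge k$), which serves as a template preventing $n$ distinct $G$-colors from simultaneously embedding $H$. The hardest step will be making the stage-$i$ coloring precise and verifying both that it remains proper and that it really blocks every rainbow induced copy of $H$; I expect an induction on $k$ with the inductive step a careful pigeonhole / Hall-type matching of $G$'s color classes against the template classes $V_1,\dots,V_\chi$.
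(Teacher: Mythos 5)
Your proposal is a plan with the two decisive steps left open, and in both halves the missing step is exactly where the paper's proof does its work. For the upper bound, your construction is internally inconsistent: you cannot both have $N_G(w_e)=N_H(v)\cup\{v\}$ and turn $W$ into a clique. Worse, the clique on $W$ is fatal: if $u$ and $v$ are non-adjacent in $H$ and your final set uses a replica of $u$ and a replica of $v$, those two replicas are adjacent in $G$, so the set no longer induces a copy of $H$; if instead you drop the clique on $W$, then two replicas of the same vertex may share a color, and replicas of adjacent vertices of $H$ are non-adjacent in $G$, which again breaks the isomorphism after two swaps. The paper avoids all of this by taking $G$ to be a \emph{replication graph}: the vertex $h_i$ together with its $m'_i$ replicas forms a clique $KH_i$ (so all of its vertices automatically get distinct colors), and $KH_i$, $KH_j$ are completely joined or completely non-joined according to whether $h_ih_j$ is an edge of $H$; then \emph{any} transversal of the cliques induces $H$. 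With that in place there is no exchange/termination issue at all: choosing the transversal greedily in the order $i=1,\dots,n$, every vertex of $KH_i$ already differs in color from the chosen neighbors of $h_i$, only $m'_i$ previously chosen colors must be avoided, and $|KH_i|=m'_i+1$ rainbow vertices are available, so one always works. Your "global termination argument" for the swap process is not a detail --- it is the theorem, and as set up (with the wrong adjacencies among replicas) it cannot succeed.

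For the lower bound, your telescoping identity is correct, but the contrapositive route ("if $G$ is small, build a proper coloring with no rainbow $H$") is strictly harder than what is needed and is not carried out; you also conflate new colors with new vertices (each stage should contribute at most $\chi$ new \emph{colors} while covering at least $n-i\chi$ new \emph{vertices}). The paper argues directly: assuming $G\tor H$, it builds a partial proper coloring adaptively. Color an induced copy $H_1$ with $\chi$ colors; any proper extension to all of $G$ contains a rainbow $H$, and since only $i\chi$ colors have been used after stage $i$, that rainbow copy meets the colored part in at most $i\chi$ vertices, so it contributes at least $n-i\chi$ fresh vertices, which are then colored with at most $\chi$ brand-new colors. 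Iterating until $n$ colors are used takes at least $k=\lceil n/\chi\rceil$ stages and forces $|G|\ge\sum_{i=0}^{k-1}(n-i\chi)$. The crucial point your plan misses is that the hypothesis $G\tor H$ is what supplies the next copy $H_{i+1}$ at every stage; a fixed $\chi$-coloring template of $H$ and a Hall-type matching give you no handle on where the copies of $H$ sit inside an arbitrary small $G$, and no such explicit blocking coloring is constructed.
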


\begin{proof}
In order to prove the second inequality we construct a suitable graph $G$ with $G\tor H$.

Let $h_1,h_2,...,h_n$ denote the vertices of $H$ in a fixed order. We define $G$ as consisting of $n$ disjoint cliques  $KH_1,KH_2,...,KH_n$ corresponding to the vertices of $H$ and connected in a way reflecting the structure of $H$. The starting clique $KH_1$ has precisely one vertex. 
Let us suppose we have defined the subgraph $G_{i-1}$ of $G$ consisting of the cliques  $KH_1,KH_2,...,KH_{i-1}$, $1<i\leq n$ and some edges among them, and let $m_i$ denotes the number of edges in $H$ of the form $(h_j,h_i)$ for $1\leq j < i$. We define now $G_{i}$ as the union of $G_{i-1}$ and the clique $KH_{i}$ consisting of $i - m_i$ vertices, and some additional edges between $G_{i-1}$ and $KH_{i}$ according to the following rule: if for some $1\leq j < i$, there is an edge $h_jh_{i}$ in $H$, then we add all the possible edges between vertices of cliques $KH_j$ and $KH_i$; otherwise, no edge between the cliques $KH_j$ and $KH_i$ is added.
(In Figure~\ref{fig1} an example is shown for a graph on 5 vertices).

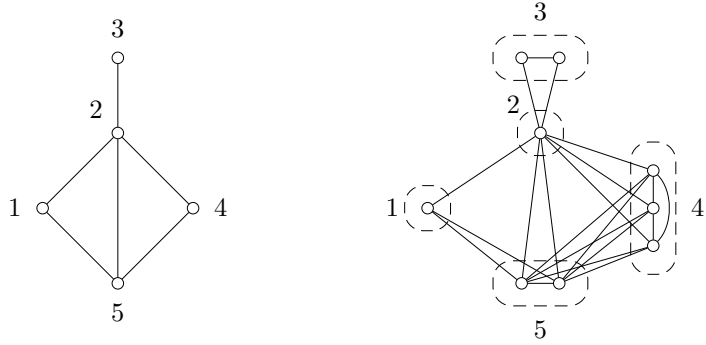
\begin{figure*}\begin{center}

\begin{pspicture}(3,3)(-1,-1.5)  
\psset{unit=0.5cm,linewidth=0.2pt,radius=0.08cm,labelsep=0.2cm}
\Cnode(0,0){v1}\nput{180}{v1}{1}
\Cnode(2,2){v2}\nput{135}{v2}{2}
\Cnode(2,4){v3}\nput{90}{v3}{3}
\Cnode(4,0){v4}\nput{0}{v4}{4}
\Cnode(2,-2){v5}\nput{270}{v5}{5}
\ncline{v1}{v2}
\ncline{v2}{v3}
\ncline{v2}{v4}
\ncline{v4}{v5}
\ncline{v5}{v1}
\ncline{v5}{v2}
\end{pspicture}
\begin{pspicture}(4,3)(-2,-1.5)  
\psset{unit=0.5cm,linewidth=0.2pt,radius=0.08cm,labelsep=0.2cm}
\Cnode(0,0){v11}
\Cnode(3,2){v21}
\Cnode(2.5,4){v31}
\Cnode(3.5,4){v32}
\Cnode(6,-1){v41}
\Cnode(6,0){v42}
\Cnode(6,1){v43}
\Cnode(2.5,-2){v51}
\Cnode(3.5,-2){v52}
\ncline{v11}{v21}
\foreach \x in {v31,v32} {\ncline{v21}{\x}}
\ncline{v31}{v32}
\foreach \x in {v41,v42,v43} {\ncline{v21}{\x} \foreach \y in {v51,v52} {\ncline{\x}{\y}}}
\ncline{v41}{v42}\ncline{v42}{v43}\ncarc[arcangle=-45]{v41}{v43}
\foreach \x in {v51,v52} {\ncline{\x}{v11} \ncline{\x}{v21}}
\ncline{v51}{v52}
\ncbox[nodesep=0.45,boxsize=0.6,linearc=0.5,linestyle=dashed]{v11}{v11}\nput[labelsep=0.6]{180}{v11}{1}
\ncbox[nodesep=0.45,boxsize=0.6,linearc=0.5,linestyle=dashed]{v21}{v21}\nput[labelsep=0.6]{135}{v21}{2}
\ncbox[nodesep=0.6,boxsize=0.6,linearc=0.5,linestyle=dashed]{v31}{v32}\nbput[npos=2.5]{3}
\ncbox[nodesep=0.6,boxsize=0.6,linearc=0.5,linestyle=dashed]{v41}{v43}\nbput[npos=0.5]{4}
\ncbox[nodesep=0.6,boxsize=0.6,linearc=0.5,linestyle=dashed]{v51}{v52}\nbput[npos=0.5]{5}
\end{pspicture}
\end{center}
\caption{Construction used in the proof of Theorem 1.1.}\label{fig1}
\end{figure*}

Consequently, if there are $m'_i$ non-edges of the form $h_jh_i$ in $H$, with $j< i$, then the number of vertices in $KH_i$ is one more: $i-m_i = m'_i+1$. Assume now that the vertices of $G$ are colored properly. We show now (inductively) that there exists a rainbow induced subgraph $H'$ of $G$ isomorphic to $H$. 
To this end from each clique $KH_i$ we choose one vertex $h'_i$. It is clear that the resulting induced subgraph is isomorphic to $H$. We need to demonstrate that the vertices can be chosen so that they have different colors. 

First, the only vertex of $KH_1$ belongs to $H'$, and suppose that from each $KH_j$, $j<i$, we have chosen a vertex $h'_j$ as one belonging to $H'$ so that all  $h'_1,...,h'_{i-1}$ are of different colors. We show that we can choose  $h'_i \in KH_i$ to keep this property. Indeed, if for some $1\leq j < i$, $(h_j,h_i)$ is an edge in $H$, then each vertex in  $KH_i$ has a different color than $h'_j$ (since all these vertices are neighbors of $h'_j$ in $G$). There are $m'_1$ vertices left among $h'_1,...,h'_{i-1}$, of different colors, to compare with the possible color of $h'_i$. Yet there are $m'_i+1$ vertices in $KH_i$, so we can choose one among them with the color different from all the colors chosen so far, as required.


In order to prove the first inequality we show that if $G$ is a required graph, such that in each coloring of $G$ there exists a rainbow induced subgraph $H$, it needs to have at least 
$\left\lceil \frac{n}{\chi} \right\rceil \left( n - \frac{\chi}{2}\left\lceil\frac{n}{\chi}-1\right\rceil\right)$ vertices. To this end we color $G$ step by step and count the number of colored vertices until $n$ colors is used. If $\chi = n$ the inequality is clear, so we may assume that $\chi < n$. 

First let us choose any induced subgraph $H_1$ of $G$ isomorphic to $H$, and color the vertices of $H_1$ into $\chi$ colors. This may be considered as a part of certain coloring of $G$. Hence it follows that there exist an induced subgraph $H_2$ of $G$ isomorphic to $H$ having in common with $H_1$ at most $\chi$ differently colored vertices. We color the remaining vertices of $H_2$ with (at most) $\chi$ new different colors. We proceed in such a way until $n$ color is used.  At the $i-th$ step of this procedure we have used not more than $i\cdot\chi$ colors. If this number is still less than $n$, than we choose a subgraph $H_{i+1}$ of $G$ isomorphic to $H$ having in common with the set of vertices colored so far at most $i\cdot\chi$ differently colored vertices. So, again, we color the remaining vertices of $H_{i+1}$ with at most $\chi$ new different colors. 

There are at least $k = \left\lceil \frac{n}{\chi} \right\rceil$ steps until $n$ colors are used.
The numbers of vertices colored at each step are at least, respectively, $n, n-\chi, n-2\chi,\ldots, n-(k-1)\chi$. It follows that the order of $G$ 
\begin{displaymath}
|G| \geq kn - \chi\frac{k(k-1)}{2} = k\left(n - \chi\frac{k-1}{2}\right),
\end{displaymath}
as required. 
\end{proof}

Note, that since the right hand side in the latter inequality, for $k < \frac{n}{\chi}+1$, is an increasing function of $k$, we have 
\begin{displaymath}
|G| \geq  \frac{n}{\chi}\left(n - \chi\frac{\frac{n}{\chi}-1}{2}\right) = \frac{n}{2}\left(\frac{n}{\chi}+1\right).
\end{displaymath}
This estimation shows clearly the order of the magnitude of the bound, but is generally weaker than the bound given in the theorem. It may be proven to be the same in cases when $n$ is a multiplicity of $\chi$. 

We note that for cliques $H=K_n$ and anticliques $H=A_n$ the bounds in the theorem are equal and thus give formulas $\rho(K_n)=n$ and $\rho(A_n)= {n(n+1)}/{2}$. 

For a double clique $H=2K_{n}$, the disjoint union of two cliques $K_{n}$, we have 
$3n\leq \rho(2K_{n}) \leq 2n+n^2$. In this case, it is not difficult to see, that the lower bound is attained. Indeed, the graph $G = K_n \cup K_{2n}$, the disjoint union of $K_n$ and $K_{2n}$, has a rainbow induced subgraph $2K_n$ in each coloring. 

An example, when only the upper bound gives the exact formula, is the star $S_n$ (that is the complete bipartite graph $K_{1,n-1}$). Our bounds yield 
\begin{displaymath}
\left\lceil n/2 \right\rceil \left\lfloor n/2+1 \right\rfloor\le \rho(S_n) \le n(n-1)/2+1.
\end{displaymath} 
One can prove directly (but it requires already some work) that 
\begin{equation}\label{star}
\rho(S_n) = n(n-1)/2+1.
\end{equation} 
It follows also from a more general result we prove in the next section.

Computing the exact value $\rho(H)$ is generally a hard problem. It involves computing both the clique number and the chromatic number. There is however one case when the value $\rho(H)$ has been already computed: these are the graphs $H$ for which the upper and lower bounds in Theorem~\ref{th1} coincide, thus giving the exact formula. We have observed already that this is a case when $H$ is a clique or anticlique. It turns out that these are special cases of a larger class, namely, the Tur\'an graphs $T(n,r)$, that is, the complete $r$-partite graph with $n$ vertices and as equal classes as possible (cf. \cite{KMSV}).

Indeed, for such a graph and $n=k\chi+s$, with $0<s\leq \chi$, we have $\left\lceil n/\chi \right\rceil = k+1$, and $\chi = r$. The left hand side of (\ref{eq1}) equals $(k+1)(s+ kr/2)$, while the left hand side equals $(r-s)k(k+1)/2 + s(k+1)(k+2)/2$, which is the same. Hence we have the formula
$\rho(T(n,r)) = \left\lceil n/\chi \right\rceil(n+s)/2$, where $s$ is the number of larger classes in $T(n,r)$. Which is more interesting, the equality of our bounds implies that $H$ must be a Tur\'an graph. In fact we have the following.

\begin{Theorem} The equality  of bounds (\ref{eq1}) in Theorem 1 holds if and only if $H$ is a Tur\'an graph. 
\end{Theorem}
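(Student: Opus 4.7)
The plan is to reduce the ``only if'' direction to Tur\'an's theorem via an edge-counting argument; the ``if'' direction is already in hand.

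For the forward direction, the paragraph just before the theorem statement provides the direct computation: when $H = T(n,\chi)$, both sides of (\ref{eq1}) evaluate to the same expression $\lceil n/\chi \rceil(n+s)/2$. So no further work is needed there.

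For the converse, my first observation is a bookkeeping one. The lower bound in (\ref{eq1}) depends on $H$ only through $n$ and $\chi$, while the upper bound $n+m'(H)$ depends on $H$ only through its number of non-edges. So if the two bounds coincide for a given $H$ with $|H|=n$ and $\chi(H)=\chi$, their common value is a function of $n$ and $\chi$ alone. Applying the ``if'' direction to $T(n,\chi)$ --- which is a complete $\chi$-partite graph with nonempty classes, hence has chromatic number exactly $\chi$ --- gives that this same value equals $n+m'(T(n,\chi))$. Matching, I would get $m'(H)=m'(T(n,\chi))$, i.e., $H$ has precisely as many edges as the Tur\'an graph on the same vertex count with the same chromatic number.

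The second step is to invoke Tur\'an's theorem in its uniqueness form: among $K_{\chi+1}$-free graphs on $n$ vertices, $T(n,\chi)$ is the unique graph (up to isomorphism) with the maximum number of edges, equivalently, with the minimum number of non-edges. Since $\chi(H)=\chi$ forces $H$ to be $K_{\chi+1}$-free (any copy of $K_{\chi+1}$ would push the chromatic number up), and since $m'(H)=m'(T(n,\chi))$ says $H$ attains this minimum, I conclude $H\cong T(n,\chi)$.

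I do not anticipate any real obstacle in this proof. The only slightly delicate point is to make sure that the ``if'' direction of the theorem is stated in a form that applies to $T(n,\chi)$ for the \emph{actual} value of $\chi$ at hand (not some other partition size), which is automatic because $T(n,\chi)$ is genuinely $\chi$-chromatic. Everything else is a single appeal to Tur\'an's theorem.
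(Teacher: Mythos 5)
Your proof is correct and takes essentially the same route as the paper: both derive $m'(H)=m'(T(n,\chi))$ by comparing the common value of the bounds with the Tur\'an-graph computation, and then invoke the uniqueness part of Tur\'an's theorem to conclude $H\cong T(n,\chi)$. Your version is in fact slightly more careful than the paper's, since you explicitly note that $\chi(H)=\chi$ makes $H$ a $K_{\chi+1}$-free graph, which is the hypothesis needed to apply Tur\'an's theorem.
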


\begin{proof}
The "if" part is proved above. It remains to prove the "only if" part. Assume that $T=T(n,\chi)$ is the Tur\'an graph with $\chi=\chi(H)$ classes. Then $\chi(T)=\chi$. From the Tur\'an's theorem \cite{Di} we obtain that $T$ has a maximal possible number of edges and it is unique for a fixed $n$ and $\chi$. By assumption and the properties of the Tur\'an graph established so far, we have
\begin{displaymath}m'(H) = \left\lceil \frac{n}{\chi} \right\rceil \left( n - \frac{\chi}{2}\left\lceil\frac{n}{\chi}-1\right\rceil\right) - n = m'(T).
\end{displaymath} 
It follows that the sizes of $T$ and $H$ are the same, and therefore by the maximality property mentioned above, $H=T$. 
\end{proof}

\section{The second lower bound}


We will consider possibilities of improving our general bounds for special classes of graphs. This can be done when we have additional information on the structure of the graph. First we 
make use of partitions of the set of vertices into anticliques. Note that for each graph $H$ there are usually many such partitions, if the anticliques of size one are admitted. For graphs with both large anticliques and large chromatic numbers our second lower bound below is much better.

\begin{Theorem} \label{th3}  If the set of vertices of a graph $H$ can be partitioned into anticliques of sizes $x_1, x_2, \ldots, x_k$, then 
\begin{equation}\label{eq3}
\rho(H) \geq \sum_{i=1}^k \frac{x_i(x_i + 1)}{2}.
\end{equation}\end{Theorem}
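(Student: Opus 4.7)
My plan is to adapt the iterative-colouring lower-bound argument of Theorem~\ref{th1}, running it in $k$ phases aligned with the anticlique partition of $V(H)$. In phase $i$ I will reserve a fresh palette of $x_i$ new colours and colour at least $x_i(x_i+1)/2$ previously uncoloured vertices of $G$. Because the $k$ palettes are disjoint and no vertex is coloured twice, the total of at least $\sum_{i=1}^{k} x_i(x_i+1)/2$ coloured vertices gives the desired lower bound on $|V(G)|$.

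Phase $i$ is itself a miniature of the Theorem~\ref{th1} lower bound specialised to $H=A_{x_i}$: at sub-step $l$ (for $l=1,\ldots,x_i$) I extend the current partial proper coloring of $G$ to a full proper coloring by giving each still-uncoloured vertex a brand-new auxiliary colour disjoint from every palette used so far, then apply the hypothesis $G\tor H$ to obtain a rainbow induced copy $H'$ of $H$, and focus on its $A_i$-part, an induced $A_{x_i}$ in $G$ on $x_i$ pairwise non-adjacent vertices with $x_i$ distinct colours. The vertices of the $A_i$-part carrying an auxiliary colour are exactly those not yet coloured by the partial coloring; being pairwise non-adjacent, they form an independent set and may all be assigned the $l$-th colour of phase $i$'s palette. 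Summing $x_i+(x_i-1)+\cdots+1 = x_i(x_i+1)/2$ across the $x_i$ sub-steps yields the phase-$i$ contribution.

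The main obstacle is justifying that at least $x_i-l+1$ vertices of the $A_i$-part are uncoloured at sub-step $l$. The naive rainbow argument only bounds the number of $A_i$-part vertices already present in the partial coloring by the total number of previously used colours $\sum_{j<i}x_j+(l-1)$, which is too weak once $i\ge 2$. Closing this gap needs the full structure of $H'$: it decomposes into $k$ colour-disjoint rainbow induced anticliques of sizes $x_1,\ldots,x_k$, and one can choose the extension together with the rainbow copy $H'$ so that the old colours inherited from phases $1,\ldots,i-1$ appear in the $A_1$-, \ldots, $A_{i-1}$-parts of $H'$ rather than the $A_i$-part---for example by reserving a distinct auxiliary palette for each later phase during the extension, so that the available rainbow copies align palettes with parts and hence restrict the $A_i$-part's inherited colours to the $l-1$ already used in phase $i$ itself. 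Making this alignment between phases and parts precise is the technical heart of the proof.
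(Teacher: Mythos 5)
Your overall strategy (an iterated adversary coloring against $G\tor H$, using whole anticliques as single color classes) is the right one, and the obstacle you flag at the end is exactly where the argument breaks: it is a genuine gap, not a deferred technicality. In sub-step $l$ of phase $i$ the rainbow copy $H'$ may contain up to $\sum_{j<i}x_j+(l-1)$ previously colored vertices, and nothing prevents up to $x_i$ of them from landing in the $A_i$-part of $H'$. Already for $H=K_{3,3}$ with $x_1=x_2=3$, at the first sub-step of phase $2$ the entire $A_2$-part of the new rainbow copy can consist of vertices colored during phase $1$ (three pairwise non-adjacent vertices of colors $1,2,3$), so you color $0$ new vertices where your accounting requires $3$. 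The proposed repair --- reserving auxiliary palettes for later phases so that rainbow copies ``align palettes with parts'' --- cannot be made to work: the completion only assigns colors to currently uncolored vertices of $G$, you have no control over which vertices of $G$ appear in which part of a future rainbow copy (nor over the isomorphism $H\to H'$ that defines the parts), and the rainbow copy guaranteed by $G\tor H$ may be unique and adversarially placed.

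The paper closes this gap by abandoning the phase structure altogether. It runs $n=|H|$ steps, one fresh color per step; at step $t$ it takes the new rainbow copy $H_t$ (which meets the colored part in at most $t-1$ vertices, one per used color) and colors the uncolored vertices of \emph{whichever} anticlique part of $H_t$ currently has the most of them. The accounting is then global rather than per part: if $k_j$ denotes the number of parts of size $j$ and $m=x_1$, each of the first $k_m$ steps yields $m$ vertices, each of the next $k_m+k_{m-1}$ steps yields at least $m-1$ (to defeat this, every size-$m$ part of $H_t$ would need two colored vertices and every size-$(m-1)$ part one, which exceeds the $t-1$ colored vertices available), and so on; the resulting sum $\sum_{j}j\bigl(k_m+\cdots+k_j\bigr)=\sum_j k_j(1+\cdots+j)$ rearranges to $\sum_i x_i(x_i+1)/2$. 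Replacing ``the $A_i$-part'' in your sub-steps by ``the part with the most uncolored vertices'' and redoing the count globally recovers exactly this proof.
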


\begin{proof}
The idea of the proof is the same as before. Yet, now we apply a different more efficient coloring using the structure of anticliques. By assumption $H$ is a union of disjoint anticliques $A_{x_1}, A_{x_2}, \ldots, A_{x_k}$  and, possibly, some additional edges between anticliques. We may assume that the sizes satisfy $x_1 \geq x_2 \geq \ldots \geq x_k$. 

Let $G$ be a graph with $G\tor H$. As before, $G$ has to contain an induced copy $H_1$ of $H$. We color only the vertices in the largest anticlique of $H_1$ using one color and treat this as a part of a certain coloring of $G$. Since $G\tor H$, it follows that there exist an induced subgraph $H_2$ of $G$ isomorphic to $H$ having in common with the colored part at most one vertex $v$. With a new color we color now all the vertices in the largest anticlique of $G-v$. Again, it follows that there exist a subgraph $H_3$ of $G$ isomorphic to $H$ having in common with the colored part at most 2 vertices $u,w$, and we color with the third color all the vertices in the largest anticlique of $G-\{u,w\}$. We continue this process until we have used $n=|H|$ different colors. We claim that we have colored in such a way at least $\sum_{i=1}^k x_i(x_i + 1)/2$ vertices.

Indeed, suppose that there are $k_i \geq 0$ anticliques of size $i$ among $A_{x_1}, A_{x_2}, \ldots, A_{x_k}$, where $1 \leq i \leq m$, and $m=x_1$ is the size of the largest anticlique. In particular, $\sum_ik_i = n$. Then, in each of the first $k_m$ steps an uncolored anticlique of size $m=x_1$ appears, and hence the number of colored vertices is precisely $m$. In each of the next $k_m+k_{m-1}$ steps, an uncolored anticlique of size at least $m-1$ has to appear, and thus the number of colored vertices is at least $m-1$. Similarly, in each of the next $k_m+k_{m-1}+k_{m-2}$ steps the number of colored vertices is at least $m-2$, and so on. It follows that $G$ has at least $\sum_{j=1}^m j(k_m+k_{m-1}+\ldots+k_j)$ vertices. We have 
\begin{displaymath} 
\sum_{j=1}^m j(k_m+k_{m-1}+\ldots+k_j) = \sum_{j=1}^m k_j(1+2+\ldots+j).
\end{displaymath} 

The latter contains $k_m$ sums $(1+2+\ldots+m)$. Since, by definition of $k_m$, $m=x_1=x_2=\ldots=x_{k_m}$, 
these sums can be presented as
\begin{displaymath}(1+2+\ldots+x_1) + (1+2+\ldots+x_2) + \ldots + (1+2+\ldots+x_{k_m}).
\end{displaymath} 
Next, if $s<m$ is the largest index such that $k_s\neq 0$, then the next non-zero summands of the considered sum are $k_s$ sums 
\begin{displaymath}(1+2+\ldots+x_{k_m+1}) + (1+2+\ldots+x_{k_m+2}) + \ldots + (1+2+\ldots+x_{k_m+k_s}).
\end{displaymath}
Continuing in such a way we see that 
\begin{displaymath} 
\sum_{j=1}^m k_j(1+2+\ldots+j) = \sum_{i=1}^k (1+2+\ldots+x_i) = \sum_{i=1}^k \frac{x_i(x_i + 1)}{2},
\end{displaymath} 
as required.
\end{proof}

Let us observe that the bound above may be presented as $ n+\sum_{i=1}^k x_i(x_i - 1)/2.$ The latter term counts exactly the number of the non-edges in the anticliques. This suggests the following more transparent formulation of the result (combined with the upper bound in (\ref{eq1}))

\begin{Cor}Let the vertex set of a graph $H$ be partitioned into anticliques and let $m'_A$ denotes the number of the non-edges in these anticliques. If $m'$ is the number of all the non-edges in $H$,  and $n = |H|$, then the following holds
\begin{displaymath} n + m_A' \leq \rho(H) \leq n+m'\end{displaymath} 
\end{Cor}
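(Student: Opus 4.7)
The plan is to observe that the upper bound $\rho(H) \le n + m'$ is literally the right-hand inequality of Theorem~\ref{th1}, so no further work is needed there. All the content of the corollary lies in rewriting the lower bound from Theorem~\ref{th3} in a more suggestive form, namely as $n + m'_A$.

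First I would fix a partition of $V(H)$ into anticliques of sizes $x_1, x_2, \ldots, x_k$, so that $\sum_{i=1}^k x_i = n$. The key combinatorial observation is that inside an anticlique of size $x_i$ every pair of vertices is a non-edge of $H$, contributing exactly $\binom{x_i}{2} = x_i(x_i-1)/2$ non-edges. Since the anticliques are disjoint, summing over $i$ counts each ``intra-anticlique'' non-edge exactly once, giving
\begin{displaymath}
m'_A = \sum_{i=1}^k \frac{x_i(x_i-1)}{2}.
\end{displaymath}

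Next I would apply Theorem~\ref{th3} and perform the one-line algebraic split $x_i(x_i+1)/2 = x_i(x_i-1)/2 + x_i$, summing termwise:
\begin{displaymath}
\rho(H) \;\geq\; \sum_{i=1}^k \frac{x_i(x_i+1)}{2} \;=\; \sum_{i=1}^k \frac{x_i(x_i-1)}{2} + \sum_{i=1}^k x_i \;=\; m'_A + n.
\end{displaymath}
Combining with the upper bound of Theorem~\ref{th1} yields $n + m'_A \leq \rho(H) \leq n + m'$, as claimed.

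There is essentially no obstacle here: the corollary is a cosmetic reformulation of Theorem~\ref{th3}. The only thing worth noting is the interpretation---$m'_A$ is a lower bound on the number of non-edges ``used'' by any anticlique partition, and the inequality $m'_A \leq m'$ is immediate since the intra-anticlique non-edges form a subset of all non-edges of $H$, which also confirms that the lower bound is consistent with the upper bound.
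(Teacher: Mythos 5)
Your proposal is correct and matches the paper's own argument exactly: the authors likewise observe that the bound of Theorem~\ref{th3} rewrites as $n+\sum_{i=1}^k x_i(x_i-1)/2 = n+m'_A$ and combine it with the upper bound of Theorem~\ref{th1}. Nothing is missing.
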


Note that from this formulation it is immediate that both the bounds are equal if and only if $H$ is a complete $r$-partite graph. Hence we have

\begin{Cor} For the complete $r$-partite graph $\rho(K_{x_1,x_2,\ldots,x_k})$ we have  \begin{displaymath}\rho(K_{x_1,x_2,\ldots,x_k})=\sum_{i=1}^r \frac{x_i(x_i + 1)}{2}
\end{displaymath}\end{Cor}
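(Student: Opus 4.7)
The plan is to apply Corollary 2.2 to the canonical partition of the complete $r$-partite graph into its $r$ color classes. For $H = K_{x_1,x_2,\ldots,x_r}$, the vertex set decomposes naturally into the $r$ parts, each of which is an independent set (anticlique) of size $x_i$. So this is a valid partition of the form required by the corollary.

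Next I would observe the key structural fact: in a complete multipartite graph, a pair of vertices is non-adjacent if and only if both vertices lie in the same part. Consequently every non-edge of $H$ is a non-edge inside one of the anticliques of our partition, which gives $m_A' = m'$. Plugging this into the two-sided bound $n + m_A' \leq \rho(H) \leq n + m'$ of Corollary 2.2 collapses it to an equality, so $\rho(H) = n + m'$.

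Finally I would translate this into the stated closed form. Since $n = \sum_{i=1}^r x_i$ and each part contributes $\binom{x_i}{2} = x_i(x_i-1)/2$ non-edges, we have
\begin{displaymath}
\rho(K_{x_1,\ldots,x_r}) \;=\; n + m' \;=\; \sum_{i=1}^r x_i + \sum_{i=1}^r \frac{x_i(x_i-1)}{2} \;=\; \sum_{i=1}^r \frac{x_i(x_i+1)}{2},
\end{displaymath}
which is exactly the claimed identity. There is no real obstacle here: the entire content of the corollary is already packaged in the remark preceding it (that the two bounds of Corollary 2.2 coincide precisely for complete multipartite graphs), and the proof amounts to using the canonical partition and performing the elementary sum above.
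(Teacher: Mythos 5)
Your proposal is correct and follows essentially the same route as the paper: the authors derive this corollary directly from the preceding two-sided bound $n+m_A'\leq\rho(H)\leq n+m'$ by observing that for a complete $r$-partite graph, with the canonical partition into its parts, every non-edge lies inside a part, so the bounds coincide. Your closing arithmetic $\sum_i x_i+\sum_i x_i(x_i-1)/2=\sum_i x_i(x_i+1)/2$ is also the intended computation, so there is nothing to add.
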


In particular, for the star $S_n$ we have the result announced in (\ref{star}).
Also Tur\'an graphs appear in this corollary as a special case. In fact, Theorem~\ref{th3} improves generally the lower bound (\ref{eq1}), yet the argument is not straight and depends on the choosing a suitable anticlique partition.


\section{The second upper bound}

It is also possible to improve the upper bound of (\ref{eq1}) for some types of graphs using a special decomposition into cliques. Given a graph $H$ we define a relation $\sim$ on the set of vertices by $x\sim y$ if and only $xy\in H$ and $x$ and $y$ have exactly the same neighbors in $H-\{x,y\}$. We note that this is an equivalence relation and the equivalence classes form cliques in $H$. 
These cliques are called the \emph{replication cliques} of $H$. 

This name is justified by the fact that, if the relation $\sim$ is non-trivial, the graph may be viewed as one obtained from a smaller graph $M$ by \emph{replication} of vertices. More precisely, a graph $H$ is obtained from a graph $M$ by replication of a vertex $x\in M$ if $H$ is obtained from $M$ by replacing vertex $x$ by a clique $K$ of vertices, and replacing each edge $xy$ incident with $x$ by the edges joining $y$ with all vertices of $K$. A graph $H$ obtained from a graph $M$ by successive replication of vertices is called a \emph{replication graph} of $M$ (this definition is a little bit more general than that given in \cite{CZ}).

We note that the construction of the graph $G$ used in the proof of the upper bound in Theorem~\ref{th1} is the replication graph of $H$ consisting of the cliques $KH_1, KH_2,\ldots,KH_n$. The size of $KH_i$ is precisely $1+n_i$, where $n_i$ denotes the number of non-edges between the vertex of $h_{i}$ and the vertices  $\{h_{1}, h_{2}, \ldots, h_{i-1}\}$. We generalize this construction as follows.

\begin{Theorem} \label{th4}  If $KH_{1}, KH_{2} \ldots, KH_{s}$ are the replication cliques of a graph $H$ of sizes $y_1,y_2,\ldots y_s \geq 1$, respectively, then
\begin{equation}\label{eq4}
\rho(H) \leq n+ \sum_{i=1}^s n_i,
\end{equation}
where $n_i$ denotes the number of non-edges between a vertex of $KH_{i}$ and the vertices of $KH_{1} \cup KH_{2} \cup \ldots \cup KH_{i-1}$. The bound is the best when the sizes of the cliques satisfy $y_1 \leq y_2 \leq \ldots \leq y_s$. 
\end{Theorem}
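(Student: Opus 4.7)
The plan is to generalize the construction from Theorem~\ref{th1}, replacing each replication clique $KH_i$ by a larger clique rather than replacing each vertex individually. This yields a smaller graph (in general) because vertices in the same replication clique have identical neighborhoods and therefore ``share'' the non-edge count.

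First I would build the graph $G$ as follows. Fix an ordering of the replication cliques $KH_1,\ldots,KH_s$ with sizes $y_1,\ldots,y_s$. Replace $KH_i$ by a clique $KH_i'$ on $y_i + n_i$ vertices, and join $KH_i'$ to $KH_j'$ by a complete bipartite graph exactly when $KH_i$ and $KH_j$ are joined in $H$ (and by nothing otherwise). Clearly $|G| = \sum_{i=1}^{s}(y_i + n_i) = n + \sum_i n_i$. It remains to verify that $G \tor H$.

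Fix a proper coloring of $G$. I would construct the rainbow copy $H'$ by induction on $i$, choosing $y_i$ vertices $h'_{i,1},\ldots,h'_{i,y_i}$ from $KH_i'$ at step $i$ so that all vertices chosen so far have pairwise distinct colors. The clique $KH_i'$ itself contributes $y_i + n_i$ vertices with pairwise distinct colors (being a clique). Among the $y_1+\cdots+y_{i-1}$ previously chosen vertices, those lying in a $KH_j'$ with $KH_j \sim KH_i$ in $H$ are all adjacent to every vertex of $KH_i'$ and therefore cannot share a color with any vertex of $KH_i'$. The remaining previously chosen vertices lie in the non-adjacent cliques $KH_j'$; by the replication property each such clique contributes exactly $y_j$ non-neighbors, and $\sum_{j<i,\, KH_j\not\sim KH_i} y_j = n_i$. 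Hence at most $n_i$ colors in $KH_i'$ are ``blocked'', leaving at least $y_i$ vertices with new colors, from which I select $h'_{i,1},\ldots,h'_{i,y_i}$. By construction the induced subgraph on all chosen vertices is isomorphic to $H$ and is rainbow.

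For the optimality of the ordering $y_1 \leq y_2 \leq \cdots \leq y_s$, I would rewrite
\begin{displaymath}
\sum_{i=1}^{s} n_i = \sum_{\substack{\{i,j\}: KH_i\not\sim KH_j \\ i<j}} y_i,
\end{displaymath}
since each non-edge between replication cliques $KH_i$ and $KH_j$ (with $i<j$) contributes $y_i$ to $n_j$ and nothing elsewhere. Thus each unordered non-adjacent pair of replication cliques contributes the size of whichever comes first in the ordering. To minimize the sum, I would place, for each such pair, the smaller clique first; a simple exchange argument shows that the sorted order $y_1 \leq \cdots \leq y_s$ achieves this simultaneously for all pairs, giving the claimed minimum.

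The only step that requires real care is the counting in the induction, specifically verifying that the number of ``color-blocking'' previously chosen vertices is exactly $n_i$; this is where the replication-clique hypothesis (uniform neighborhoods) is essential, since it converts clique-to-clique non-adjacency into a clean per-vertex non-edge count. The remaining parts are bookkeeping and a straightforward exchange argument.
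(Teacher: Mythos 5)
Your proposal is correct and follows essentially the same route as the paper: the identical replication graph $G$ with cliques of size $y_i+n_i$ joined completely or not at all according to $H$, and the identical inductive color-counting step that uses the uniform-neighborhood property of replication cliques to block at most $n_i$ colors. Your optimality argument, via the identity expressing $\sum_i n_i$ as a sum over non-adjacent clique pairs of the size of whichever clique comes first, is a slightly more direct global version of the paper's adjacent-transposition exchange, but the substance is the same.
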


\begin{proof}
To find a graph $G$ of the required size with $G\tor H$, we consider again a replication graph of $H$ with the cliques of suitable sizes. Each clique $KH_i$ is replicated (replaced) by a clique $KG_i$ of size $y_i+n_i$. (Note that since $n_1=0$ by definition, the cliques $KG_1$ and $KH_1$ are the same). 
Consider any coloring of the graph $G$ resulted in such a way. We describe by induction a rainbow induced subgraph $H'$ isomorphic to $H$. As the first part of $H'$ we take $KH_1=KG_1$. Since  $KH_1$ is a clique all the vertices in it have different colors. Suppose we have constructed a rainbow part $H'_{i-1}$ isomorphic to $KH_{1} \cup KH_{2} \cup \ldots \cup KH_{i-1}$ contained in $G'_{i-1}=KG_{1} \cup KG_{2} \cup \ldots \cup KG_{i-1}$. To obtain a part $H'_i$ isomorphic to $KH_{1} \cup KH_{2} \cup \ldots \cup KH_{i}$ it is enough to adjoin to $H'_{i-1}$ any subgraph of $KG_i$ of cardinality $y_i=|KH_i|$. We have only to make sure that the vertices have different colors from those in $H'_{i-1}$. 

By assumptions, there are $n_i$ non-edges between a vertex of $KG_i$ and the vertices of $H'_{i-1}$. Moreover, for each vertex in $KG_i$ the non-edges lead to the same vertices in $H'_{i-1}$. It follows that there are at most $n_i$ vertices in $KG_i$ they have a color of a vertex in $H'_{i-1}$. Consequently, there are at least $y_i$ vertices in $KG_i$ that have a color different from the vertices in $H'_{i-1}$. Hence, the clique they form is just as required.

In order to see that the bound is the best when the sizes of the cliques are in an increasing order, suppose we have two cliques $KG_{i-1}$ and $KG_i$ in $G$ of sizes  $y_{i-1}+n_{i-1}$ and  $y_i+n_i$, respectively, such that $y_{i-1} > y_i$. If we change the order of the cliques $KG_{i-1}$ and $KG_i$ in $G$ then the only part of the sum of (\ref{eq4}) that may change is that given by $n_{i-1}+n_i$. Let $n'_{i-1}$ and $n'_i$ denote the respective numbers of non-edges after the change of the order of the cliques $KG_{i-1}$ and $KG_i$. Now, there are two cases: (1) either there is no edge between $KG_{i-1}$ and $KG_i$, or (2) there is an edge between any two vertices of $KG_{i-1}$ and $KG_i$. In the first case, the change of the order of the cliques does not change the sum $n_{i-1}+n_i$. In the second case $n_i = m_i+ y_{i-1}$, where $m_i$ counts the non-edges between a vertex in $KH_i$ and the vertices in $KH_{1} \cup KH_{2} \cup \ldots \cup KH_{i-2}$. After the change of the order we have $n'_{i-1} = m_i$ and $n'_i = n_{i-1} + y_i$. Whence 
\begin{displaymath}n'_{i-1} +n'_i = n_{i-1}+m_i+ y_i < n_{i-1}+m_i+ y_{i-1}= n_{i-1}+n_i,
\end{displaymath}yielding a better bound. Now the claim easily follows.
\end{proof}

Combining the results of Theorems~\ref{th3} and \ref{th4} we obtain also the exact formula for the complements of complete $r$-partite graphs, that is, disjoint unions of cliques. 

\begin{Cor}
Let $H$ be the union of disjoint cliques $KH_{1} \cup KH_{2} \cup \dots \cup KH_{s}$ of sizes $y_1 \geq y_2 \geq \ldots \geq y_s$, respectively. Then 
\begin{displaymath}\rho(H) = \sum_{i=1}^s iy_i.
\end{displaymath}
\end{Cor}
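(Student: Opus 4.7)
The plan is to apply Theorems~\ref{th3} and~\ref{th4} to obtain matching lower and upper bounds on $\rho(H)$, both equal to $\sum_{i=1}^s i y_i$.

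To identify the replication cliques of $H$, note that two vertices in the same $KH_i$ are adjacent and share the remaining vertices of $KH_i$ as common neighbors in $H$, while two vertices in different cliques are not adjacent. Hence the replication cliques of $H$ are precisely $KH_1, \ldots, KH_s$. For the upper bound I would apply Theorem~\ref{th4} after relabelling the cliques in increasing order of size (processing $KH_s, KH_{s-1}, \ldots, KH_1$, since by assumption $y_1 \geq \ldots \geq y_s$). As $H$ has no edges between distinct cliques, each $n_i$ is simply the total size of the previously processed cliques, and a direct reindexing gives $n + \sum_i n_i = \sum_{i=1}^s i y_i$.

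For the lower bound I would exhibit an explicit anticlique partition and invoke Theorem~\ref{th3}. Arrange the vertices of $H$ as a Young diagram whose $i$-th column is $KH_i$, of height $y_i$, with $y_1 \geq \ldots \geq y_s$. The rows form a partition into anticliques: the $j$-th row picks one vertex from each column of height at least $j$, so the chosen vertices lie in distinct cliques of $H$ and are pairwise non-adjacent. Setting $r_j = |\{i : y_i \geq j\}|$, Theorem~\ref{th3} yields $\rho(H) \geq \sum_j r_j(r_j+1)/2$. Writing $r_j(r_j+1)/2 = \binom{r_j}{2} + r_j$ and interpreting $\sum_j \binom{r_j}{2}$ as the number of column-pairs reaching a common row, one obtains $\sum_{i<i'} \min(y_i, y_{i'}) = \sum_i (i-1) y_i$; adding $\sum_j r_j = n = \sum_i y_i$ gives $\sum_i i y_i$, matching the upper bound.

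The main obstacle is the choice of the anticlique partition for the lower bound, since Theorem~\ref{th3} produces different bounds for different partitions; the conjugate (row) decomposition of the Young diagram is the one that matches the upper bound. Once that choice is made, everything else reduces to routine bookkeeping with the identity above.
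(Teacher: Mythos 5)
Your proposal is correct and follows essentially the same route as the paper: the lower bound comes from Theorem~\ref{th3} applied to the row (conjugate) anticlique partition of the Young diagram of the clique sizes, and the upper bound from Theorem~\ref{th4} with the replication cliques $KH_1,\ldots,KH_s$ processed in increasing order of size. The only difference is cosmetic bookkeeping in evaluating the lower-bound sum (your $\binom{r_j}{2}$/column-pairs identity versus the paper's telescoping $\sum_i \frac{(i+1)i}{2}(y_i-y_{i+1})$), both of which correctly yield $\sum_{i=1}^s i y_i$.
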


\begin{figure*}
\begin{center}
\begin{pspicture}(7,1.7)(-2,-3.5)
$
\psset{unit=0.7cm,linewidth=0.2pt,radius=0.08cm,labelsep=0.5cm}
\rput(0,1.2){\rnode{k1}{KH_1}}
\rput(2,1.2){\rnode{k2}{KH_2}}
\rput(4,1.2){\rnode{k3}{\dots}}
\rput(6.2,1.2){\rnode{k4}{KH_{s-1}}}
\rput(8,1.2){\rnode{k5}{KH_s}}
              
\rput[l](-2.8,0){\rnode{a1}{AH_1}}\Cnode(0,0){v11}\Cnode(2,0){v21}\rput(4,0){\rnode{hdots1}{\dots}}\Cnode(6,0){v31}\Cnode(8,0){v41}
\rput[l](-2.8,-1){\rnode{a2}{AH_2}}\Cnode(0,-1){v12}\Cnode(2,-1){v22}\rput(4,-1){\rnode{hdots2}{\dots}}\Cnode(6,-1){v32}\Cnode(8,-1){v42}
\rput[c](-2.3,-2){\rnode{a3}{\vdots}}\rput(0,-2){\rnode{vdots1}{\vdots}}\rput(2,-2){\rnode{vdots2}{\vdots}}
\rput[l](-2.8,-3){\rnode{a4}{AH_{r-2}}}\Cnode(0,-3){v13}\Cnode(2,-3){v23}
\rput[l](-2.8,-4){\rnode{a5}{AH_{r-1}}}\Cnode(0,-4){v14}
\rput[l](-2.8,-5){\rnode{a6}{AH_r}}\Cnode(0,-5){v15}

\ncbox[nodesep=0.5,boxsize=0.5,linearc=0.5,linestyle=solid]{v11}{v15}
\ncbox[nodesep=0.5,boxsize=0.5,linearc=0.5,linestyle=solid]{v21}{v23}
\ncbox[nodesep=0.5,boxsize=0.5,linearc=0.5,linestyle=solid]{v21}{v23}
\ncbox[nodesep=0.5,boxsize=0.5,linearc=0.5,linestyle=solid]{v31}{v32}
\ncbox[nodesep=0.5,boxsize=0.5,linearc=0.5,linestyle=solid]{v41}{v42}
\ncbox[nodesep=0.8,boxsize=0.3,linearc=0.3,linestyle=dashed]{v11}{v41}
\ncbox[nodesep=0.8,boxsize=0.3,linearc=0.3,linestyle=dashed]{v12}{v42}
\ncbox[nodesep=0.8,boxsize=0.3,linearc=0.3,linestyle=dashed]{v13}{v23}
\ncbox[nodesep=0.8,boxsize=0.3,linearc=0.3,linestyle=dashed]{v14}{v14}
\ncbox[nodesep=0.8,boxsize=0.3,linearc=0.3,linestyle=dashed]{v15}{v15}
$
\end{pspicture}\end{center}
\caption{Partitioning the union of cliques into anticliques.}\label{fig3}
\end{figure*}
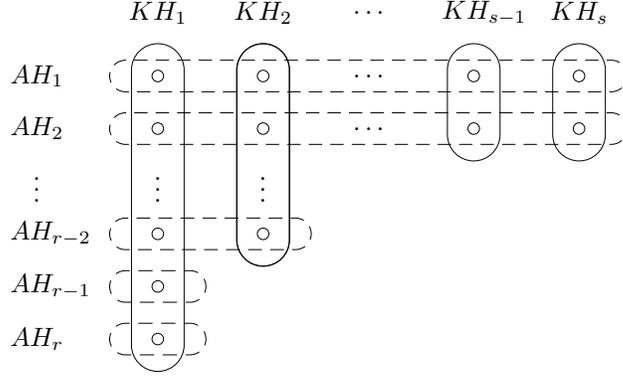

\begin{proof}
First we apply the lower bound of Theorem~\ref{th3}.  Note that $H$ can be partitioned naturally into $r=y_1$ anticliques $AH_{1} \cup AH_{2} \cup \dots \cup AH_{r}$ of the sizes $x_1 \geq x_2 \geq \ldots \geq x_r$, respectively (see Figure~\ref{fig3}). The size of the largest anticlique $AH_1$ is $x_1= s$, and there are exactly $y_s$ anticliques of this size. Then, the size of the next anticlique $AH_{y_s+1}$ is $s-1$ or less (in case when $y_s = y_{s-1}$). It follows that the right hand side of (\ref{eq3}) is
\begin{displaymath}
\sum_{i=1}^k \frac{x_i(x_i + 1)}{2} = \frac{(s+1)s}{2}y_s + \frac{s(s-1)}{2}(y_{s-1}-y_s) + \ldots + \frac{2\cdot 1}{2}(y_{1}-y_2) = 
\end{displaymath}
\begin{displaymath}
= \sum_{i=1}^s \frac{(i+1)i}{2}(y_i-y_{i+1}) =
\end{displaymath}
where $y_{s+1} =0$, and further calculation yields
\begin{displaymath}
=\sum_{i=1}^s \frac{(i+1)i}{2}(y_i-y_{i+1}) = \sum_{i=1}^s (\frac{(i+1)i}{2}-\frac{i(i-1)}{2})y_i =
\sum_{i=1}^s iy_i,
\end{displaymath}
as required.

Now we apply the upper bound of Theorem~\ref{th4}. According to the second statement of Theorem~\ref{th4} we consider our cliques in the reverse order. We note that $n_2 = y_s$, $n_3=y_s+y_{s-1}$,\ldots, $n_s = y_s+y_{s-1}+\ldots+y_2$, and $n = y_s+y_{s-1}+\ldots+y_1$. Summing everything up yields the desired result.
\end{proof}

\section{Paths and replication graphs}

While we are able to compute exact values $\rho(H)$ for some classes of graphs, we do not know such a value for the simplest kind of graphs---paths. Let $P_n$ denotes the path of length $n-1$.  Using (\ref{eq1}) we obtain for odd $n$: 
\begin{displaymath}(n+1)^2/4 \leq \rho(P_n) \leq 1+n(n-1)/2,
\end{displaymath} and for even $n$:   
\begin{displaymath}n(n+2)/4 \leq \rho(P_n) \leq 1+n(n-1)/2.
\end{displaymath}
(For paths, (\ref{eq3}) and (\ref{eq4}) yield no improvements.) This gives the exact value $\rho(P_3)=4$, and the following bounds for small $n$:  $6 \leq \rho(P_4) \leq 7$, $9 \leq \rho(P_5) \leq 11$, $12 \leq \rho(P_6) \leq 16, 16 \leq \rho(P_7) \leq 22$. Applying a suitable computer program generating all vertex colorings for small graphs, we have obtained the following: $\rho(P_4)=7, \rho(P_5)=10, \rho(P_6) \leq 14, \rho(P_7) \leq 19$. In particular, by computer search we have found the graph $G$ in Figure~\ref{fig2}, containing in every coloring a rainbow $P_5$.

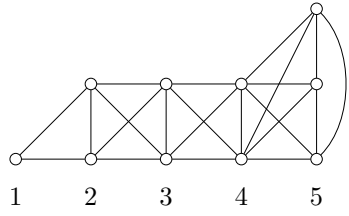
\begin{figure*}
\begin{center}
\begin{pspicture}(4,2.5)(0,-0.7)$ 
\psset{unit=1.0cm,linewidth=0.2pt,radius=0.08cm,labelsep=0.3cm}
\Cnode(0,0){p11}
\Cnode(1,0){p21}\Cnode(1,1){p22}
\Cnode(2,0){p31}\Cnode(2,1){p32}
\Cnode(3,0){p41}\Cnode(3,1){p42}
\Cnode(4,0){p51}\Cnode(4,1){p52}\Cnode(4,2){p53}
\nput{270}{p11}{1}
\nput{270}{p21}{2}
\nput{270}{p31}{3}
\nput{270}{p41}{4}
\nput{270}{p51}{5}
\ncline{p11}{p21}\ncline{p11}{p22}
\ncline{p21}{p22}
\ncline{p21}{p31}\ncline{p21}{p32}\ncline{p22}{p31}\ncline{p22}{p32}
\ncline{p31}{p32}
\ncline{p31}{p41}\ncline{p31}{p42}\ncline{p32}{p41}\ncline{p32}{p42}
\ncline{p41}{p42}
\ncline{p41}{p51}\ncline{p41}{p52}\ncline{p41}{p53}\ncline{p42}{p51}\ncline{p42}{p52}\ncline{p42}{p53}
\ncline{p51}{p52}\ncline{p52}{p53}\ncarc[arcangle=-45]{p51}{p53}
$\end{pspicture}
\caption{Graph $G=P_5(1,2,2,2,3)$.}\label{fig2}
\end{center}
\end{figure*}

This is a replication graph of $P_5$. We denote it $G=P_5(1,2,2,2,3)$ meaning that the vertices of $P_5$ are replaced by the cliques 
$K_1,K_2,K_2,K_2,K_3$ in the natural order. 
Our computer search shows that there are other replication graphs of $P_5$ of order $14$ containing rainbow $P_5$ in every coloring; for example, $G=P_5(3,1,1,2,3)\tor P_5$. In turn, 
the replication graph $G=P_6(2,2,3,3,2,2)$ satisfies $G\tor P_6$. A natural question arise whether 
$\rho(P_n)$ can be obtained always by means of replication graphs. More precisely: {is it true that for each $n\geq 2$ there exists a replication graph $G$ of $P_n$ of order $\rho(P_n)$ with $G\tor P_n$}? 

We observe that it is generally not the case for other graphs. In Figure~\ref{fig4} a graph $H$ is depicted and a graph $G$ beside with $G\tor H$, which shows that $\rho(H) \leq 7$. Yet, as it can be easily checked, no replication graph of $H$ of order $7$ or less has this property.

Let us define the number $\rho_R(H)$ to be the minimal order of a replication graph $G$ of $H$ such that in each coloring of $G$ there exists a rainbow induced copy of $H$ \emph{having exactly one vertex in each of the cliques} $K_i$ corresponding to a vertex $h_i$ in $H$. We will write $G\torr H$ in such a situation. This condition is stronger but fairly natural and leads to the following nice properties. 

\begin{Theorem}\label{cor}
For any graph $H$ the following conditions hold:
\begin{enumerate}
\item[\rm (i)] \begin{math}
\rho(H) \leq \rho_R(H).
\end{math}
\item[\rm (ii)] If  $H'$ is a subgraph of $H$ obtained from $H$ by removing some edges, then $\rho_R(H') \geq \rho_R(H).$
\item[\rm (iii)]  If $H$ is the union of two disjoint graphs $H_1$ and $H_2$, and possibly some additional edges between $H_1$ and $H_2$, and $m(H_1,H_2)$ is the number of non-edges between $H_1$ and $H_2$, then 
\begin{displaymath}\rho_R(H_1) + \rho_R(H_2) \leq \rho_R(H) \leq \rho_R(H_1) + \rho_R(H_2) + m(H_1,H_2).
\end{displaymath}
\end{enumerate}
\end{Theorem}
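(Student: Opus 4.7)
\emph{Parts (i) and (ii).} For (i) I would simply unpack the definitions: any replication graph $G$ of $H$ with $G\torr H$ and $|G|=\rho_R(H)$ also satisfies $G\tor H$, since a rainbow induced copy of $H$ obtained by picking one vertex from each replication clique is in particular a rainbow induced copy of $H$. For (ii) my plan is to take an optimal replication graph $G'$ of $H'$ with clique sizes $y_1,\ldots,y_n$ and let $G$ be the replication graph of $H$ with the same sizes; then $|G|=|G'|=\rho_R(H')$. Since $H\supseteq H'$, $G$ differs from $G'$ only by added complete bipartite joins between certain pairs of cliques, so every proper coloring of $G$ is also proper on $G'$. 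Applying $G'\torr H'$ yields vertices $v_i\in KG_i$ of distinct colors, and the same tuple induces a copy of $H$ in $G$ by the defining property of a replication graph, giving $G\torr H$ and $\rho_R(H)\le\rho_R(H')$.

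\emph{Part (iii), lower bound.} Let $G$ be optimal for $\rho_R(H)$. I would decompose $G=G_1\cup G_2$ according to whether each clique corresponds to a vertex of $H_1$ or $H_2$, so that each $G_i$ is a replication graph of $H_i$. To show $G_1\torr H_1$, I take any proper coloring of $G_1$ and extend it to $G$ by giving the vertices of $G_2$ pairwise distinct colors not used on $G_1$; the resulting coloring of $G$ is proper, so $G\torr H$ supplies a rainbow induced copy of $H$ with one vertex per clique whose $H_1$-part is the required rainbow copy in $G_1$. By symmetry $G_2\torr H_2$, and hence $\rho_R(H)=|G_1|+|G_2|\ge\rho_R(H_1)+\rho_R(H_2)$.

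\emph{Part (iii), upper bound.} I would take optimal $G_1,G_2$ with clique sizes $y_i$, set $b_j$ to be the number of vertices of $H_1$ non-adjacent to $h_j$ for each $h_j\in H_2$ (so that $\sum_j b_j=m(H_1,H_2)$), and build $G$ as the replication graph of $H$ using the clique sizes of $G_1$ on the $H_1$-side and size $y_j+b_j$ on each $h_j\in H_2$; this gives $|G|=\rho_R(H_1)+\rho_R(H_2)+m(H_1,H_2)$. To verify $G\torr H$ I argue in two stages: first apply $G_1\torr H_1$ on the $H_1$-part of $G$ to select rainbow representatives $v_i$. In an $H_2$-clique $KG_j$, a vertex $w$ can share a color with a given $v_i$ only when $h_ih_j$ is a non-edge in $H$ (otherwise $w$ and $v_i$ are adjacent in $G$ and must be colored differently), and since the vertices of the clique $KG_j$ carry pairwise distinct colors, at most $b_j$ of its $y_j+b_j$ vertices are ``blocked''. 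Selecting any $y_j$ unblocked vertices in each $KG_j$ then produces an induced subgraph isomorphic to $G_2$ and properly colored, so $G_2\torr H_2$ supplies a rainbow copy of $H_2$ whose colors are automatically disjoint from those of the $v_i$. The two rainbow copies together form a rainbow induced copy of $H$ in $G$ with exactly one vertex per clique.

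\emph{Main obstacle.} The only step that is not essentially bookkeeping is the upper bound of (iii): I must certify simultaneously that at most $b_j$ vertices of $KG_j$ are blocked by the already-chosen $v_i$ and that the remaining vertices still carry an induced copy of the whole of $G_2$, so that the hypothesis $G_2\torr H_2$ may actually be invoked. The structural fact making both work is that every edge $h_ih_j$ joining $H_1$ to $H_2$ becomes a complete bipartite join in $G$, absorbing all but at most $b_j$ of the possible color conflicts within each $KG_j$.
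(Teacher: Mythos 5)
Your proposal is correct and follows essentially the same route as the paper: (i) by definition, (ii) by adding edges to an optimal replication graph of $H'$ (which creates no new proper colorings), and (iii) by splitting/composing replication graphs, with the upper bound obtained by enlarging each $H_2$-clique by the number of non-edges to $H_1$. Your write-up merely supplies more detail than the paper's (notably the explicit two-stage selection argument and the color-extension trick for the lower bound of (iii)), which the paper leaves implicit by referring back to the proof of Theorem~\ref{th1}.
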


\begin{proof} 
The first inequality (i) is immediate from the definition. For (ii), suppose that $G'\torr H'$. Then $G$ with $G\torr H$ can be obtained from $G'$ just by adding suitable edges. Since adding edges creates no new proper coloring, $G$ has the required property and thus proves (ii). 

For (iii), let $G$ be a replication graph of order $\rho_R(G)$ with $G\torr H$. It contains disjoint induced replication graphs $G_1$ of $H_1$ and $G_2$ of $H_2$ with properties $G_1\torr H_1$ and $G_2\torr H_2$. This proves the first inequality. 

For the second, let $G_1$ and $G_2$ be replication graphs of $H_1$ and $H_2$ of orders $\rho_R(H_1)$ and $\rho_R(H_2)$, respectively,  with properties $G_1\torr H_1$ and $G_2\torr H_2$. We compose them into the replication graph of $H$ adding suitable edges between $H_1$ and $H_2$. To make sure that there are enough vertices of different colors (similarly as in the proof of Theorem~\ref{th1}) we enlarge replication cliques in $H_2$: each clique is enlarged with the number of vertices equal to the number of non-edges between a vertex of the clique and the vertices in $H_1$. This yields altogether $m(H_1,H_2)$ new vertices, proving the second inequality.
\end{proof}

\begin{figure*}
\begin{center}

\begin{pspicture}(3,1)(0,-1)
$
\psset{unit=1.0cm,linewidth=0.2pt,radius=0.08cm,labelsep=0.2cm}
\Cnode(0,0){v1}\Cnode(1,0){v2}\Cnode(2,0){v3}\Cnode(3,0){v4}
\ncline{v2}{v3}\ncline{v3}{v4}
\nput{270}{v1}{1}\nput{270}{v2}{2}\nput{270}{v3}{3}\nput{270}{v4}{4}
{\rput(1.6,1.2){\rnode{k1}{H}}}
$
\end{pspicture}
\begin{pspicture}(3,2)(-3,-1)
${\rput(0.4,1.2){\rnode{k1}{G}}}
\psset{unit=1.0cm,linewidth=0.2pt,radius=0.1cm,labelsep=0.2cm}
\Cnode(0,0){v1}\Cnode(1,0.14){v2}\Cnode(2,1){v3}\Cnode(3,0.14){v4}\Cnode(2.61,-0.84){v5}\Cnode(1.39,-0.84){v6}\Cnode(2,0){v7}
\ncline{v2}{v3}\ncline{v3}{v4}\ncline{v4}{v5}\ncline{v5}{v6}\ncline{v6}{v2}\ncline{v2}{v7}\ncline{v3}{v7}\ncline{v4}{v7}\ncline{v5}{v7}\ncline{v6}{v7}    
$
\end{pspicture}

\end{center}
\caption{An example of $H$ with $\rho(H) < \rho_R(H)$.}\label{fig4}
\end{figure*}
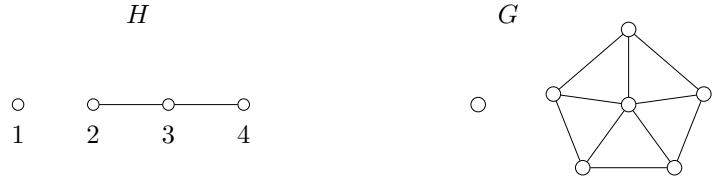

For graphs $H$ with non-trivial replication cliques it is possible to formulate a version of this result being the counterpart of Theorem~\ref{th4}. This requires however to redefine the number $\rho_R(H)$ suitably. We leave it to the reader. 

We observe that the results in (ii) and (iii) does not hold for $\rho(H)$. Indeed, graph $H'$ in Figure~\ref{fig5} is obtained from $H''$ by removing one edge, while the number $\rho(H') < \rho(H'')$ (the values of $\rho$ for graphs in Figure~\ref{fig5} have been obtained with a help of computer program). On the other hand, $H'$ is obtained from $H$ and vertex $u$, and some additional edges between $H$ and $u$, and the number $\rho(H)+\rho(K_1) = 10 > \rho(H')$ (disproving the first inequality in (iii) for $\rho$). Finally, let $H = P_4$, $H_1$ be $K_1$ consisting of a single vertex $u$, and $H_2$ be the disjoint union of $K_2$ and $K_1$. Then $P_4$ is the union of disjoint $H_1$ and $H_2$, and two additional edges joining $u$ with $H_2$, and we have
\begin{displaymath}
\rho(H_1) + \rho(H_2) + m(H_1,H_2) = 1 + 4 + 1 = 6 < 7= \rho(H),
\end{displaymath}
disproving the second inequality in (iii) for $\rho$.

\begin{figure*}\begin{center}
\begin{pspicture}(2,2.2)(0,0)
\large$
\psset{unit=1.0cm,linewidth=0.2pt,radius=0.08cm,labelsep=0.2cm}
\rput(1,1.7){\rnode{l}{H}}
\Cnode(0,0){v1}\Cnode(0,1){v2}\Cnode(1,1){v3}\Cnode(2,0){v4}\Cnode(2,1){v5}
\ncline{v2}{v3}\ncline{v3}{v4}\ncline{v3}{v5}\ncline{v4}{v5}
$
\end{pspicture}\hspace{2cm}
\begin{pspicture}(2,2)
\large$
\psset{unit=1.0cm,linewidth=0.2pt,radius=0.08cm,labelsep=0.2cm}
\rput(1,1.7){\rnode{l}{H'}}
\Cnode(0,0){v1}\Cnode(0,1){v2}\Cnode(1,0){v3}\Cnode(1,1){v4}\Cnode(2,0){v5}\Cnode(2,1){v6}
\ncline{v2}{v3}\ncline{v2}{v4}\ncline{v3}{v4}\ncline{v3}{v5}\ncline{v3}{v6}\ncline{v4}{v5}\ncline{v4}{v6}\ncline{v5}{v6}
\nput{270}{v3}{u}
$
\end{pspicture}\hspace{2cm}
\begin{pspicture}(2,1.7)
\large$
\psset{unit=1.0cm,linewidth=0.2pt,radius=0.08cm,labelsep=0.2cm}
\rput(1,1.7){\rnode{l}{H''}}
\Cnode(0,0){v1}\Cnode(0,1){v2}\Cnode(1,0){v3}\Cnode(1,1){v4}\Cnode(2,0){v5}\Cnode(2,1){v6}
\ncline{v2}{v3}\ncline{v2}{v4}\ncline{v3}{v4}\ncline{v3}{v5}\ncline{v3}{v6}\ncline{v4}{v5}\ncline{v4}{v6}\ncline{v5}{v6}
\ncline{v1}{v3}
\nput{270}{v3}{u}
$
\end{pspicture}\vspace*{4mm}\end{center}
\caption{Counterexamples: $\rho(H)=9$, $\rho(H')=9$, $\rho(H'')=10$.}\label{fig5}
\end{figure*}
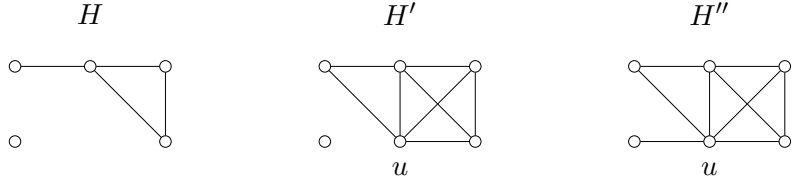

\section{Conclusions and open problems}

Obviously, the inequalities in (iii) of Theorem~\ref{cor} yields the best results, when the number $m(H_1,H_2)$ of non-edges is small. In particular, for \emph{join} of graphs we have the equality:

\begin{Cor}\label{corlast}
For any graphs $H_1$ and $H_2$ 
\begin{displaymath}
\rho_R(H_1 + H_2) = \rho_R(H_1) + \rho_R(H_2).
\end{displaymath}
\end{Cor}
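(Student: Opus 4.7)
My plan is to derive this as an immediate consequence of part (iii) of Theorem~\ref{cor}. The join $H_1 + H_2$ is by definition obtained from the disjoint union of $H_1$ and $H_2$ by inserting every possible edge between a vertex of $H_1$ and a vertex of $H_2$. Hence the number of non-edges between $H_1$ and $H_2$ in the join is $m(H_1,H_2) = 0$.

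Therefore I would simply apply the double inequality
\begin{displaymath}
\rho_R(H_1) + \rho_R(H_2) \leq \rho_R(H_1+H_2) \leq \rho_R(H_1) + \rho_R(H_2) + m(H_1,H_2)
\end{displaymath}
from Theorem~\ref{cor}(iii) with $H = H_1 + H_2$, and observe that the upper and lower bounds coincide.

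There is no genuine obstacle here; the only thing to verify is that the hypothesis of Theorem~\ref{cor}(iii)---namely that $H$ is a union of disjoint $H_1$ and $H_2$ possibly with some additional edges between them---is indeed satisfied by the join, which is immediate from the definition. Thus the corollary is purely a specialization of the previous result to the extremal case $m(H_1,H_2)=0$.
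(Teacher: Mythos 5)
Your proposal is correct and is exactly the argument the paper intends: the corollary is stated as an immediate specialization of Theorem~\ref{cor}(iii) to the join, where $m(H_1,H_2)=0$ makes the two bounds coincide. Nothing further is needed.
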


We do not have any counterexample in case of $\rho$, and we conjecture that this property may be true for $\rho$, as well. Nevertheless, $\rho_R$ seems to be more convenient tool for computations, and therefore any result showing that the difference between $\rho$ and $\rho_R(H)$ cannot be large would be desirable. 

Unfortunately, our results cannot be very helpful for paths, since paths have many non-edges. Nevertheless, combining (iii) of Theorem~\ref{cor} with the inequality $\rho_R(P_7) \leq 19$ (obtained by computer search) we can infer inductively the following improvement of the general upper bound for paths:

\begin{Cor}
\begin{displaymath}\rho(P_n) \leq 1+n(n-1)/2 - 3\lfloor n/7 \rfloor.
\end{displaymath}
\end{Cor}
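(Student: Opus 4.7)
The plan is to prove the stronger inequality
\[
\rho_R(P_n) \leq 1 + \frac{n(n-1)}{2} - 3\left\lfloor\frac{n}{7}\right\rfloor
\]
by induction on $n$ in steps of seven, and then deduce the corollary from part (i) of Theorem~\ref{cor}. For the base cases $0 \leq n \leq 6$ the floor term vanishes and the inequality reduces to $\rho_R(P_n) \leq 1 + n(n-1)/2$; this is delivered by the construction in the proof of Theorem~\ref{th1}, which in fact assembles a replication graph in which the rainbow copy of $H$ is forced to hit each clique $KH_i$ in exactly one vertex, so it certifies $G\torr H$ rather than merely $G\tor H$. In particular, this construction proves $\rho_R(H) \leq n + m'(H)$ for all $H$, which handles the $n < 7$ base.

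For the inductive step, suppose $n \geq 7$ and the bound holds at $n-7$. Split $P_n$ into $H_1 \cong P_7$ (the first seven vertices) and $H_2 \cong P_{n-7}$ (the last $n-7$ vertices), connected by a single bridging edge. Then the number of non-edges between $H_1$ and $H_2$ is $m(H_1,H_2) = 7(n-7) - 1$. Part (iii) of Theorem~\ref{cor}, combined with the computer-verified inequality $\rho_R(P_7) \leq 19$ and the inductive hypothesis, yields
\[
\rho_R(P_n) \leq 19 + \rho_R(P_{n-7}) + 7(n-7) - 1.
\]
Using $\lfloor (n-7)/7\rfloor = \lfloor n/7\rfloor - 1$ to rewrite the inductive hypothesis as $\rho_R(P_{n-7}) \leq 4 + (n-7)(n-8)/2 - 3\lfloor n/7 \rfloor$, and then simplifying via $(n-7)(n-8)/2 + 7(n-7) = n(n-1)/2 - 21$, the right-hand side collapses to exactly $1 + n(n-1)/2 - 3\lfloor n/7\rfloor$, closing the induction.

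The corollary itself then follows from part (i) of Theorem~\ref{cor}, namely $\rho(P_n) \leq \rho_R(P_n)$. There is no substantive obstacle here; the only step that requires care is observing that the upper-bound construction of Theorem~\ref{th1} is intrinsically a replication graph, so that the base cases are genuine statements about $\rho_R$ and the induction can be carried out entirely in the stronger parameter, accumulating a saving of three for every block of seven consecutive vertices peeled off from the path.
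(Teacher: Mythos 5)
Your proof is correct and takes essentially the same route the paper intends: the authors derive this corollary precisely by combining part (iii) of Theorem~\ref{cor} with the computer-verified bound $\rho_R(P_7)\leq 19$, peeling off blocks of seven vertices inductively, with the base cases supplied by the fact that the construction in Theorem~\ref{th1} is a replication graph and hence bounds $\rho_R$. The only cosmetic point is that for $n=7$ your split is degenerate ($H_2$ is empty and there is no bridging edge, so $m(H_1,H_2)=0$ rather than $7(n-7)-1=-1$); that case should simply be read off directly from $\rho_R(P_7)\leq 19 = 1+21-3$.
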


This still can be improved by $2$ or $1$, when the remainder of $n$ modulo $7$ is greater than $5$ or $4$, respectively (using other inequalities obtained by computer search mentioned above).

The question what is the exact value $\rho(P_n)$ remains open. Perhaps it can be approached by considering related problems for $\rho_R(P_n)$: 1) what is the exact formula for $\rho_R(P_n)$, and 2) is it true that  $\rho(P_n) = \rho_R(P_n)$ for all $n$?
These questions do not seem easy, but we think they are very interesting from combinatorial point of view.

\end{document}